\newtheorem{thm}{Theorem}[section]
\newtheorem{cor}[thm]{Corollary}
\newtheorem{lem}[thm]{Lemma}
\theoremstyle{definition}
\newtheorem{defin}[thm]{Definition}
\newtheorem{rem}[thm]{Remark}
\numberwithin{equation}{section}
\newcommand{\la}{\left\langle}
\newcommand{\ra}{\right\rangle}
\newcommand{\nn}{\nonumber}
\begin{document}

%%%%% To ease editing, for IMPAN journals add:

\baselineskip=17pt

%%%%%%%%%%%%%%%%

\title[Operator Entropy Inequalities]{Operator entropy inequalities}

\author[A. Morassaei, F. Mirzapour, M.S. Moslehian]{A. Morassaei$^1$, F. Mirzapour$^1$ and M. S. Moslehian$^2$}

\address{$^{1}$ Department of Mathematics, Faculty of Sciences, University of Zanjan, P. O. Box 45195-313, Zanjan, Iran.}
\email{morassaei@znu.ac.ir\\ f.mirza@znu.ac.ir}

\address{$^{2}$ Department of Pure Mathematics, Center of Excellence in Analysis on Algebraic Structures (CEAAS), Ferdowsi University of Mashhad, P. O. Box 1159, Mashhad 91775, Iran.}
\email{moslehian@um.ac.ir\\moslehian@member.ams.org}
\urladdr{\url{http://profsite.um.ac.ir/~moslehian/}}

\subjclass[2010]{Primary 47A63; Secondary 15A42, 46L05, 47A30.}

\keywords{Jensen inequality, operator entropy, entropy inequality,
operator concavity, positive linear map.}

%%%%%%%%

\begin{abstract}
In this paper we investigate a notion of relative operator entropy,
which develops the theory started by J.I. Fujii and E. Kamei [Math.
Japonica 34 (1989), 341--348]. For two finite sequences
$\mathbf{A}=(A_1,\cdots,A_n)$ and $\mathbf{B}=(B_1,\cdots,B_n)$  of
positive operators acting on a Hilbert space, a real number $q$ and
an operator monotone function $f$ we extend the concept of entropy
by
$$
S_q^f(\mathbf{A}|\mathbf{B}):=\sum_{j=1}^nA_j^{\frac{1}{2}}\left(A_j^{-\frac{1}{2}}B_jA_j^{-\frac{1}{2}}\right)^qf\left(A_j^{-\frac{1}{2}}B_jA_j^{-\frac{1}{2}}\right)A_j^{\frac{1}{2}}\,,
$$
and then give upper and lower bounds for
$S_q^f(\mathbf{A}|\mathbf{B})$ as an extension of an inequality due
to T. Furuta [Linear Algebra Appl. 381 (2004), 219--235] under
certain conditions. Afterwards, some inequalities concerning the
classical Shannon entropy are drawn from it.
\end{abstract}

\maketitle

\section{Introduction and preliminaries}
Throughout the paper, let $\mathbb{B}(\mathscr{H})$ denote the
algebra of all bounded linear operators acting on a complex Hilbert
space $(\mathscr{H},\la \cdot,\cdot\ra)$ and $I$ is the identity
operator. In the case when $\dim \mathscr{H} = n$, we identify
$\mathbb{B}(\mathscr{H})$ with the full matrix algebra $\mathcal{M}_n(\mathbb{C})$
 of all $n \times n$ matrices with entries in the complex field $\mathbb{C}$ and denote its identity
 by $I_n$. A self-adjoint operator $A\in\mathbb{B}(\mathscr{H})$ is called positive if $\la Ax,x\ra\ge 0$ for all $x\in \mathscr{H}$ and then we write $A\ge0$. An operator $A$ is said to be strictly
positive (denoted by $A>0$) if it is positive and invertible. For
self-adjoint operators $A,B\in\mathbb{B}(\mathscr{H})$, we say $A\le
B$ if $B-A \ge 0$. Let $f$ be a continuous real valued function
defined on an interval $J$. The function $f$ is called operator
decreasing if $B\le A$ implies $f(A)\le f(B)$ for all
$A,B\in\mathbb{B}(\mathscr{H})$ with spectra in $J$. The function
$f$ is said to be operator concave on $J$ if
$$
\lambda f(A) + (1 - \lambda)f(B)\le f(\lambda A + (1 - \lambda)B)
$$
for all self-adjoint operators $A,B\in\mathbb{B}(\mathscr{H})$ with
spectra in $J$ and all $\lambda\in[0,1]$.

In 1850 Clausius [Ann. Physik (2) 79 (1850), 368--397, 500--524]
introduced the notion of entropy in the thermodynamics. Since then
several extensions and reformulations have been developed in various
disciplines; cf. \cite{ME, LR, L, NU}. There have been investigated
the so-called entropy inequalities by some mathematicians, see
\cite{BLP, BS, FUR2} and references therein. A relative operator
entropy of strictly positive operators $A,B$ was introduced in the
noncommutative information theory by Fujii and Kamei \cite{FK} by
$$
S(A|B) = A^{\frac{1}{2}} \log(A^{-\frac{1}{2}}BA^{-\frac{1}{2}})A^{\frac{1}{2}}.
$$
When $A$ is positive, one may set
$S(A|B):=\lim_{\epsilon\to+0}S(A+\epsilon I|B)$ if the limit which
is taken in the strong operator topology exists. In the same paper,
it is shown that $S(A|B) \le 0$ if $A \ge B$. There is an analogous
notion called the perspective function in the literature, see
\cite{E, CK}: If $f: [0,\infty) \to \mathbb{R}$ is an operator
convex function, then the perspective function $g$ associated to $f$
is defined by
\begin{eqnarray*}
  g(B,A)=A^{\frac{1}{2}}f(A^{-\frac{1}{2}}BA^{-\frac{1}{2}})A^{\frac{1}{2}}
\end{eqnarray*}
for any self-adjoint operator $B$ and any strictly positive operator
$A$. One may consider a more general case. Let
$\widetilde{B}=(B_1,\cdots,B_n)$ and
$\widetilde{A}=(A_1,\cdots,A_n)$ be $n$-tuples of self-adjoint and
strictly positive operators, respectively. Then the non-commutative
$f$-divergence functional $\Theta$ is defined by
\begin{eqnarray*}
  \Theta(\widetilde{B},\widetilde{A})=\sum_{i=1}^{n}A_i^{\frac{1}{2}}f(A_i^{-\frac{1}{2}}B_iA_i^{-\frac{1}{2}})A_i^{\frac{1}{2}}.
\end{eqnarray*}

Next, recall that $X\natural_q Y$ is defined by
$X^{\frac{1}{2}}\left(X^{-\frac{1}{2}}YX^{-\frac{1}{2}}\right)^qX^{\frac{1}{2}}$
for any real number $q$ and any strictly positive operators $X$ and
$Y$. For $p\in[0,1]$, the operator $X \natural_p Y$ coincides with
the well-known $p$-power mean of $X, Y$.

Furuta \cite{F} defined a parametric extension of the operator
entropy by
$$
S_p(A|B)=A^{\frac{1}{2}}\left(A^{-\frac{1}{2}}BA^{-\frac{1}{2}}\right)^p\log\left(A^{-\frac{1}{2}}BA^{-\frac{1}{2}}\right)A^{\frac{1}{2}}\,,
$$
where $p\in[0, 1]$ and $A, B$ are strictly positive operators on a
Hilbert space $\mathscr H$ and proved some operator entropy
inequalities as follows: if $\{A_1,\cdots,A_n\}$ and
$\{B_1,\cdots,B_n\}$ are two sequences of strictly positive
operators on a Hilbert space $\mathscr H$ such that
$\sum_{j=1}^nA_j\natural_p B_j\le I$., then
\begin{align}\label{e8}
& \log\left[\sum_{j=1}^n\left(A_j \natural_{p+1} B_j\right)+t_0\left(I-\sum_{j=1}^nA_j \natural_p B_j\right)\right]-(\log t_0)\left(I-\sum_{j=1}^nA_j \natural_p B_j\right)\nonumber\\
& \ge \sum_{j=1}^nS_p(A_j|B_j)\\
& \ge -\log\left[\sum_{j=1}^n\left(A_j \natural_{p-1}
B_j\right)+t_0\left(I-\sum_{j=1}^nA_j \natural_p
B_j\right)\right]\\
&\quad+(\log t_0)\left(I-\sum_{j=1}^nA_j \natural_p
B_j\right)\nonumber
\end{align}
for a fixed real number $t_0>0$.\\

The object of this paper is to state an operator entropy inequality
parallel to the main result of \cite{F} and refine some known
operator entropy inequalities.

%------------------------------------------------------------------------------------------------------------
\section{Operator entropy inequality}

The following notion is basic in our work.

\begin{defin}\label{d1}
Assume that $\mathbf{A}=(A_1,\cdots,A_n)$ and
$\mathbf{B}=(B_1,\cdots,B_n)$ are finite sequences of strictly
positive operators on a Hilbert space $\mathscr{H}$. For
$q\in\mathbb R$ and an operator monotone function $f: (0,\infty) \to
[0,\infty)$ the \emph{generalized operator Shannon entropy} is
defined by
\begin{equation}\label{ShEn}
S_q^f(\mathbf{A}|\mathbf{B}):=\sum_{j=1}^nS_q^f(A_j|B_j)\,,
\end{equation}
where $S^f_q(A_j|B_j)=A_j^{1/2}\left(A_j^{-1/2}B_jA_j^{-1/2}\right)^qf\left(A_j^{-1/2}B_jA_j^{-1/2}\right)A_j^{1/2}$.
\end{defin}
We recall that for $q=0$, $f(t)=\log t$ and $A, B>0$, we get the
relative operator entropy $S_0^f(A|B)=A^{\frac{1}{2}}\log
\left(A^{-\frac{1}{2}}BA^{-\frac{1}{2}}\right)A^{\frac{1}{2}}=S(A|B)$.
It is interesting to point out that $S_q(A|B)=-S_{1-q}(B|A)$ for any
real number $q$, in particular, $S_1(A|B) = - S(B|A)$. In fact,
since $Xf(X^*X)=f(XX^*)X$ holds for every $X\in\mathbb{B}(\mathscr
H)$ and every continuous function $f$ on the interval $[0,\|X\|^2]$,
considering $X=B^{1/2}A^{-1/2}$ and $f(t)=\log t$ we get
\begin{align*}
S_q(A|B) &=A^{\frac{1}{2}}\left(A^{-\frac{1}{2}}BA^{-\frac{1}{2}}\right)^q\log\left(A^{-\frac{1}{2}}BA^{-\frac{1}{2}}\right)A^{\frac{1}{2}}\\
&=B^{\frac{1}{2}}B^{-\frac{1}{2}}A^{\frac{1}{2}}\left(A^{-\frac{1}{2}}BA^{-\frac{1}{2}}\right)^q\log\left(A^{-\frac{1}{2}}BA^{-\frac{1}{2}}\right)
A^{\frac{1}{2}}B^{-\frac{1}{2}}B^{\frac{1}{2}}\\
&=B^{\frac{1}{2}}{X^*}^{-1}\left(X^*X\right)^q\log\left(X^*X\right)X^{-1}B^{\frac{1}{2}}\\
&=B^{\frac{1}{2}}{X^{-1}}^*\left(X^{-1}{X^{-1}}^*\right)^{-q}\log\left(X^*X\right)X^{-1}B^{\frac{1}{2}}\\
&=B^{\frac{1}{2}}\left({X^{-1}}^*X^{-1}\right)^{1-q}\left({X^{-1}}^*X^{-1}\right)^{-1}{X^{-1}}^*\log\left(X^*X\right)X^{-1}B^{\frac{1}{2}}\\
&=B^{\frac{1}{2}}\left({X^{-1}}^*X^{-1}\right)^{1-q}X\log\left(X^*X\right)X^{-1}B^{\frac{1}{2}}\\
&=B^{\frac{1}{2}}\left({X^{-1}}^*X^{-1}\right)^{1-q}\log\left(XX^*\right)XX^{-1}B^{\frac{1}{2}}\\
&=-B^{\frac{1}{2}}\left({X^{-1}}^*X^{-1}\right)^{1-q}\log\left({X^{-1}}^*X^{-1}\right)B^{\frac{1}{2}}\\
&=-B^{\frac{1}{2}}\left({X^*}^{-1}X^{-1}\right)^{1-q}\log\left({X^*}^{-1}X^{-1}\right)B^{\frac{1}{2}}\\
&=-S_{1-q}(B|A)\,.
\end{align*}

%--------------------------------------------------------------------------------------------------------------------
We need the following useful lemma.

\begin{lem}\cite[Proposition 3.1]{F}\label{HANP}
If $f$ is a continuous real function on an interval $J$, then the following conditions are equivalent:
\begin{enumerate}
\item[(i)] $f$ is operator concave.

\item[(ii)] $f(C^*XC+t_0(I-C^*C))\ge C^*f(X)C+f(t_0)(I-C^*C))$ for any operator $C$ with $\|C\|\le1$ and any self-adjoint operator $X$ with $sp(X)\subseteq J$ and for a fixed real number $t_0\in J$.

\item[(iii)] $f(\sum_{j=1}^nC_j^*X_jC_j+t_0(I-\sum_{j=1}^nC_j^*C_j))\ge \sum_{j=1}^nC_j^*f(X_j)C_j+f(t_0)(I-\sum_{j=1}^nC_j^*C_j))$ for operators $C_j$ with $\sum_{j=1}^nC_j^*C_j\le I$ and self-adjoint operators $X_j$ with $sp(X_j)\subseteq J$ for $j=1,\cdots,n$ and for a fixed real number $t_0\in J$.
\end{enumerate}
\end{lem}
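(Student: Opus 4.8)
The plan is to establish the cycle of implications (iii) $\Rightarrow$ (ii) $\Rightarrow$ (i) $\Rightarrow$ (iii). The first is immediate: specializing (iii) to $n=1$ and writing $C=C_1$, $X=X_1$ reproduces (ii) verbatim. So the content lies in the other two implications, both of which I would carry out by the $2\times2$ operator-matrix technique standard for Jensen-type operator inequalities.

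For (ii) $\Rightarrow$ (i), fix self-adjoint $A,B$ with spectra in $J$ and $\lambda\in[0,1]$. On $\mathscr H\oplus\mathscr H$ I would set
$$X=\begin{pmatrix} A & 0 \\ 0 & B\end{pmatrix}, \qquad C=\begin{pmatrix} \lambda^{1/2}I & 0 \\ (1-\lambda)^{1/2}I & 0\end{pmatrix},$$
so that $\|C\|\le1$ with $C^*C=\mathrm{diag}(I,0)$, while a direct block computation gives $C^*XC=\mathrm{diag}(\lambda A+(1-\lambda)B,\,0)$ and $C^*f(X)C=\mathrm{diag}(\lambda f(A)+(1-\lambda)f(B),\,0)$. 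Since both $X$ and $C^*XC+t_0(I-C^*C)$ are block diagonal, $f$ acts blockwise on them; comparing the $(1,1)$ corners of the inequality furnished by (ii) then yields $f(\lambda A+(1-\lambda)B)\ge \lambda f(A)+(1-\lambda)f(B)$, which is precisely operator concavity.

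The substantive direction is (i) $\Rightarrow$ (iii), and I would split it into two moves. First, reduce the $n$-term statement to a single isometry: put $D=(I-\sum_{j=1}^nC_j^*C_j)^{1/2}$ and form the column operator $C\colon\mathscr H\to\mathscr K$ into the $(n+1)$-fold sum $\mathscr K=\mathscr H\oplus\cdots\oplus\mathscr H$ by $Cx=(C_1x,\dots,C_nx,Dx)$, which satisfies $C^*C=\sum_{j=1}^nC_j^*C_j+D^2=I$, i.e. $C$ is an isometry. With $X=\mathrm{diag}(X_1,\dots,X_n,t_0I)$, self-adjoint with spectrum in $J$ since $t_0\in J$, one computes $C^*XC=\sum_{j=1}^nC_j^*X_jC_j+t_0(I-\sum_{j=1}^nC_j^*C_j)$, and upon replacing $X$ by $f(X)$, $C^*f(X)C=\sum_{j=1}^nC_j^*f(X_j)C_j+f(t_0)(I-\sum_{j=1}^nC_j^*C_j)$. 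Thus (iii) is exactly the Jensen inequality $f(C^*XC)\ge C^*f(X)C$ for the isometry $C$. Second, I would prove this isometric inequality: dilate $C$ to a unitary $U$ on a doubled space (using that $CC^*$ is a projection), set $\widetilde X=\mathrm{diag}(X,t_0I)$, and apply the pinching $Z\mapsto\tfrac12(Z+\Lambda Z\Lambda)$ with $\Lambda=\mathrm{diag}(I,-I)=\Lambda^*=\Lambda^{-1}$ to $Z=U^*\widetilde XU$. Midpoint concavity together with the identity $f(U^*\widetilde XU)=U^*f(\widetilde X)U$, valid because $U$ and $U\Lambda$ are unitary, shows that $f$ of the block-diagonal part of $U^*\widetilde XU$ dominates the block-diagonal part of $U^*f(\widetilde X)U$; reading off the $(1,1)$ corner, where $(U^*\widetilde XU)_{11}=C^*XC$, produces $f(C^*XC)\ge C^*f(X)C$. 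I expect the main obstacle to be the bookkeeping in this last move, namely constructing the unitary dilation of the non-square isometry $C$ on the correct enlarged space and verifying that the relevant corner of $U^*\widetilde XU$ is genuinely $C^*XC$, rather than any conceptual difficulty.
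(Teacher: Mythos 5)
Your proof is correct, but note first that the paper itself contains no proof of this lemma: it is stated with a citation to \cite[Proposition 3.1]{F}, so the only comparison available is with the cited source, not with anything internal to the paper. In substance you have reconstructed the standard Hansen--Pedersen dilation argument, which is also the method behind Furuta's own proof, so your route matches the authoritative one. The three steps check out: (iii) $\Rightarrow$ (ii) is indeed just the case $n=1$; in (ii) $\Rightarrow$ (i) your contraction satisfies $C^*C=\mathrm{diag}(I,0)$, every operator in sight is block diagonal, and since a block-diagonal self-adjoint operator is positive exactly when each diagonal block is, reading off the $(1,1)$ corner is legitimate; in (i) $\Rightarrow$ (iii) the column operator with last entry $D=(I-\sum_{j}C_j^*C_j)^{1/2}$ is an isometry, the operator $\mathrm{diag}(X_1,\dots,X_n,t_0I)$ has spectrum in $J$ because $t_0\in J$, and the translation of (iii) into $f(C^*XC)\ge C^*f(X)C$ is exact. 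The step you flagged as the main obstacle also works out: since $C$ is an isometry, $I-CC^*$ is a projection, so $U=\left(\begin{smallmatrix} C & I-CC^* \\ 0 & -C^*\end{smallmatrix}\right)$ is a genuine unitary from $\mathscr H\oplus\mathscr K$ onto $\mathscr K\oplus\mathscr H$, and because its lower-left block vanishes, the $(1,1)$ corner of $U^*\widetilde XU$ is exactly $C^*XC$ regardless of the filler in the second diagonal block of $\widetilde X$; the pinching $Z\mapsto\tfrac12(Z+\Lambda Z\Lambda)$ combined with midpoint concavity and $f(U^*\widetilde XU)=U^*f(\widetilde X)U$ then finishes the argument.

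The one point you should make explicit is the identification of $\mathscr H$ with $\mathscr H\oplus\mathscr H$ and with $\mathscr H^{\oplus(n+1)}$ (equivalently, that operator concavity is understood on all Hilbert spaces, or all ampliations, simultaneously): both (ii) $\Rightarrow$ (i) and (i) $\Rightarrow$ (iii) apply hypotheses stated for operators on $\mathscr H$ to operators on direct sums of copies of $\mathscr H$. This is harmless when $\mathscr H$ is infinite dimensional and is glossed over in every standard treatment, including the cited one, but it is a hypothesis-matching issue rather than pure bookkeeping and deserves one sentence in a complete write-up.
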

%-----------------------------------------------------------------------------------------
For other equivalence conditions the reader may consult \cite{FMPS,
MOS} and references therein. Using an idea of \cite{F} we prove the
next result.

The following result gives lower and upper bounds for $S_q^f(\mathbf
A|\mathbf B)$.
\begin{thm}\label{tShEnIn}
Assume that $f$, $\mathbf A$ and $\mathbf B$ are as in Definition
\ref{d1}. Let $\sum_{j=1}^nA_j=\sum_{j=1}^nB_j=I$ and $f$ be
operator concave. Then
\begin{align}\label{ShEnIn}
f&\left[\sum_{j=1}^n(A_j\natural_{p+1}B_j)+t_0\left(I-\sum_{j=1}^nA_j\natural_pB_j\right)\right]-f(t_0)\left(I-\sum_{j=1}^nA_j\natural_pB_j\right)\nn\\
&\ge S_p^f(\mathbf{A}|\mathbf{B})
\end{align}
for all $p\in[0,1]$ and for any fixed real number $t_0>0$, and
\begin{align}\label{ShEnIn2}
-f&\left[\sum_{j=1}^n(A_j\natural_{p-1}B_j)+t_0\left(I-\sum_{j=1}^nA_j\natural_pB_j\right)\right]+f(t_0)\left(I-\sum_{j=1}^nA_j\natural_pB_j\right)\nn\\
&\le S_p^f(\mathbf{A}|\mathbf{B})
\end{align}
for all $p\in[2,3]$ and for any fixed real number $t_0>0$.
\end{thm}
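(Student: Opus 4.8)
The plan is to obtain both estimates from the generalized Jensen inequality of Lemma~\ref{HANP}(iii), applied to a family of Kraus-type operators manufactured from the relative operators $T_j:=A_j^{-1/2}B_jA_j^{-1/2}>0$. In this notation $A_j\natural_qB_j=A_j^{1/2}T_j^qA_j^{1/2}$ and $S_q^f(A_j|B_j)=A_j^{1/2}T_j^qf(T_j)A_j^{1/2}$, so the various terms in \eqref{ShEnIn} and \eqref{ShEnIn2} are all powers of a single operator $T_j$ sandwiched between copies of $A_j^{1/2}$; this is what lets one inequality pass into the other by changing an exponent.

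For the upper bound \eqref{ShEnIn} I would take $C_j:=T_j^{p/2}A_j^{1/2}$ and $X_j:=T_j$. Then $C_j^*C_j=A_j\natural_pB_j$, $C_j^*X_jC_j=A_j\natural_{p+1}B_j$ and $C_j^*f(X_j)C_j=S_p^f(A_j|B_j)$. Because $p\in[0,1]$, the map $(A,B)\mapsto A\natural_pB$ is jointly concave and positively homogeneous of degree one, hence superadditive, so $\sum_jC_j^*C_j=\sum_jA_j\natural_pB_j\le\big(\sum_jA_j\big)\natural_p\big(\sum_jB_j\big)=I\natural_pI=I$. The hypothesis $\sum_jC_j^*C_j\le I$ of Lemma~\ref{HANP}(iii) therefore holds, and feeding the three identities into condition (iii) for the operator concave function $f$ and moving the term $f(t_0)\big(I-\sum_jA_j\natural_pB_j\big)$ to the left gives exactly \eqref{ShEnIn}.

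For the lower bound \eqref{ShEnIn2} the guiding idea is to run the same construction with the reciprocal variable $X_j:=T_j^{-1}$, keeping $C_j=T_j^{p/2}A_j^{1/2}$, so that $C_j^*X_jC_j=A_j\natural_{p-1}B_j$ reproduces the index $p-1$ appearing in \eqref{ShEnIn2}. To recover $S_p^f$ on the right one must now apply the inequality to the conjugate function $h(t):=f(1/t)$, since $h(T_j^{-1})=f(T_j)$ yields $C_j^*h(X_j)C_j=S_p^f(A_j|B_j)$. As $f$ is operator monotone, its integral representation shows that $h$ is operator convex, so the reverse form of Lemma~\ref{HANP}(iii) is the relevant tool and it produces a lower rather than an upper estimate. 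Alternatively one may reach the same conclusion through the reflection identity $S_p^f(\mathbf A|\mathbf B)=S_{1-p}^{h}(\mathbf B|\mathbf A)$ combined with the mean symmetry $A\natural_qB=B\natural_{1-q}A$, transporting the computation to the swapped pair $(\mathbf B,\mathbf A)$.

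The step I expect to be the main obstacle is the control of the Kraus sum $\sum_jC_j^*C_j=\sum_jA_j\natural_pB_j$. In \eqref{ShEnIn} this sum is dominated by $I$ precisely because $\natural_p$ is jointly concave for $p\in[0,1]$; once $p$ leaves $[0,1]$ this fails, and joint convexity of $\natural_p$ on $[1,2]$ reverses it to $\sum_jA_j\natural_pB_j\ge I$. This reversal of the sub-unital condition is exactly what turns Jensen's inequality around, converts the upper bound into the lower bound, and is responsible for the shift of the admissible window to $p\in[2,3]$. Making this precise calls for the super-unital (operator convex) counterpart of Lemma~\ref{HANP}(iii) for $h$, and the careful matching of this window against the two indices $p-1$ and $p$ that occur in \eqref{ShEnIn2} is the genuinely delicate part of the argument.
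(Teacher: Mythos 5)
Your treatment of the first inequality \eqref{ShEnIn} is correct and coincides with the paper's proof: the choices $C_j=T_j^{p/2}A_j^{1/2}$, $X_j=T_j$, the identities $C_j^*C_j=A_j\natural_pB_j$, $C_j^*X_jC_j=A_j\natural_{p+1}B_j$, $C_j^*f(X_j)C_j=S_p^f(A_j|B_j)$, sub-unitality $\sum_jA_j\natural_pB_j\le I$ from superadditivity of $\natural_p$ on $[0,1]$, and Lemma \ref{HANP}(iii).

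For the second inequality \eqref{ShEnIn2}, however, your plan does not close, and the obstacle you flag yourself is fatal to it as designed. The convex counterpart of Lemma \ref{HANP}(iii) for $h(t)=f(1/t)$ (obtained by applying the lemma to $-h$) requires exactly the same contractivity hypothesis $\sum_jC_j^*C_j\le I$; a super-unital condition $\sum_jC_j^*C_j\ge I$ does not ``turn Jensen around'' --- Hansen--Pedersen type inequalities have no such analogue, so your construction with $C_j=T_j^{p/2}A_j^{1/2}$, $X_j=T_j^{-1}$ has no lemma to be fed into. Worse, even the claimed reversal $\sum_jA_j\natural_pB_j\ge I$ is unjustified on the actual window: joint convexity of $(A,B)\mapsto A\natural_pB$ holds for $p\in[-1,0]\cup[1,2]$ but fails on $(2,3]$, so for $p\in[2,3]$ you control $\sum_jA_j\natural_pB_j$ in neither direction. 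Your alternative via the (correct) reflection identity $S_p^f(\mathbf A|\mathbf B)=S_{1-p}^{h}(\mathbf B|\mathbf A)$ does not help either, since it moves the exponent to $1-p\in[-2,-1]$, again outside any window where sub-unitality is available.

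The paper's mechanism for \eqref{ShEnIn2} is entirely different and much softer, and it is what the hypothesis $f\ge 0$ in Definition \ref{d1} is for --- a hypothesis your argument never uses. Since $p-2\in[0,1]$, apply the already-proved inequality \eqref{ShEnIn} with $p$ replaced by $p-2$; this bounds $S_{p-2}^f(\mathbf A|\mathbf B)$ from above by $f\left[\sum_j(A_j\natural_{p-1}B_j)+t_0\left(I-\sum_jA_j\natural_{p-2}B_j\right)\right]-f(t_0)\left(I-\sum_jA_j\natural_{p-2}B_j\right)$. Negate, and note that $f\ge0$ forces $T^qf(T)\ge0$, hence $S_q^f(A_j|B_j)\ge0$ for every real $q$, so that $-S_{p-2}^f(\mathbf A|\mathbf B)\le 0\le S_p^f(\mathbf A|\mathbf B)$; chaining these gives the lower bound. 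It is this ``shift by $2$ plus positivity of $S^f$'' idea, not a reversed Jensen inequality, that explains the window $p\in[2,3]$. (One caveat about the source itself: run this way the correction terms naturally come out indexed by $\natural_{p-2}$, whereas the paper's displays --- and the statement of \eqref{ShEnIn2} --- carry $\natural_p$; the paper passes over this discrepancy silently, so the published proof also deserves scrutiny at exactly this point.)
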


\begin{proof}
Since $\sum_{j=1}^nA_j \natural_q B_j \le
\left(\sum_{j=1}^nA_j\right) \natural_q
\left(\sum_{j=1}^nB_j\right)$ (see \cite[Theorem 5.7]{FMPS}, for
every $q\in[0,1]$, and $\sum_{j=1}^nA_j=\sum_{j=1}^nB_j=I$, we have
$\sum_{j=1}^nA_j \natural_{p} B_j \le I$.

Let us fix a positive real number $t_0$. Since $f$ is operator concave, we get
\begin{align*}
f\Bigg[&\sum_{j=1}^n(A_j\natural_{p+1}B_j)+t_0\left(I-\sum_{j=1}^nA_j\natural_pB_j\right)\Bigg]\\
=&f\Bigg[\sum_{j=1}^n\left(\Big(A_j^{-\frac{1}{2}}B_jA_j^{-\frac{1}{2}}\Big)^{\frac{p}{2}}A_j^{\frac{1}{2}}\right)^*
\Big(A_j^{-\frac{1}{2}}B_jA_j^{-\frac{1}{2}}\Big)\left(\Big(A_j^{-\frac{1}{2}}B_jA_j^{-\frac{1}{2}}\Big)^{\frac{p}{2}}A_j^{\frac{1}{2}}\right)\\
&+t_0\left(I-\sum_{j=1}^nA_j\natural_p B_j\right)\Bigg]\\
\ge &\sum_{j=1}^nA_j^{\frac{1}{2}}\left(A_j^{-\frac{1}{2}}B_jA_j^{-\frac{1}{2}}\right)^{\frac{p}{2}}
f\left(A_j^{-\frac{1}{2}}B_jA_j^{-\frac{1}{2}}\right)\left(A_j^{-\frac{1}{2}}B_jA_j^{-\frac{1}{2}}\right)^{\frac{p}{2}}
A_j^{\frac{1}{2}}\\
&+f(t_0)\left(I-\sum_{j=1}^nA_j\natural_pB_j\right)\quad\quad\quad\quad\mbox{(by~the ~Lemma \ref{HANP} (iii))}\\
=&\sum_{j=1}^nA_j^{\frac{1}{2}}\left(A_j^{-\frac{1}{2}}B_jA_j^{-\frac{1}{2}}\right)^p
f\left(A_j^{-\frac{1}{2}}B_jA_j^{-\frac{1}{2}}\right)A_j^{\frac{1}{2}}+f(t_0)\left(I-\sum_{j=1}^nA_j\natural_pB_j\right)\\
=&\sum_{j=1}^nS_p^f(A_j|B_j)+f(t_0)\left(I-\sum_{j=1}^nA_j\natural_pB_j\right)\,,
\end{align*}
whence
\begin{align*}
f\Bigg[\sum_{j=1}^n(A_j\natural_{p+1}B_j)&+t_0\left(I-\sum_{j=1}^nA_j\natural_pB_j\right)\Bigg]\nn\\
\ge&\sum_{j=1}^nS_p^f(A_j|B_j)+f(t_0)\left(I-\sum_{j=1}^nA_j\natural_pB_j\right)\,,
\end{align*}
Following a similar argument, we obtain
\begin{align*}
f\Bigg[\sum_{j=1}^n(A_j\natural_{p-1}B_j)&+t_0\left(I-\sum_{j=1}^nA_j\natural_pB_j\right)\Bigg]\nn\\
\ge&\sum_{j=1}^nS_{p-2}^f(A_j|B_j)+f(t_0)\left(I-\sum_{j=1}^nA_j\natural_pB_j\right)\,.
\end{align*}
Thus
\begin{align*}
-f\Bigg[\sum_{j=1}^n(A_j\natural_{p-1}B_j)&+t_0\left(I-\sum_{j=1}^nA_j\natural_pB_j\right)\Bigg]+f(t_0)\left(I-\sum_{j=1}^nA_j\natural_pB_j\right)\nn\\
\le &-S_{p-2}^f(\mathbf{A}|\mathbf{B})\,.
\end{align*}
Since $f$ is a continuous nonnegative function, $X^qf(X) \ge 0$ for
every $X \ge 0$ and $q\in\mathbb R$. Hence
$$
\left(A_j^{-\frac{1}{2}}B_jA_j^{-\frac{1}{2}}\right)^qf\left(A_j^{-\frac{1}{2}}B_jA_j^{-\frac{1}{2}}\right) \ge 0\,.
$$
Consequently, $S_q^f(A_j|B_j) \ge 0$. Thus
\begin{align*}
S_p^f(A_j|B_j)+S_{p-2}^f(A_j|B_j) \ge 0\quad(j=1,\cdots,n)\,,
\end{align*}
whence $-S_{p-2}^f(\mathbf{A}|\mathbf{B}) \le
S_p^f(\mathbf{A}|\mathbf{B})$, which yields the required result.
\end{proof}
%----------------------------------------------------------------------------------------------------------------------
\begin{rem}
By taking $f(t)=\log t$ in Theorem \ref{tShEnIn}, we get \eqref{e8}.
\end{rem}
%----------------------------------------------------------------------------------------------------------------------
\begin{cor}\label{coei1}
Let $\mathbf{A}=(A_1,\cdots,A_n)$ and $\mathbf{B}=(B_1,\cdots,B_n)$
be two sequences of strictly positive operators on a Hilbert space
$\mathscr{H}$ such that $\sum_{j=1}^nA_j=\sum_{j=1}^nB_j=I$. If $f: (0,\infty) \to [0,\infty)$ is a function which
is both operator monotone and operator concave, then
\begin{enumerate}
\item[(i)] $f\Big(\sum_{j=1}^nB_jA_j^{-1}B_j\Big)\ge S_1^f(\mathbf{A}|\mathbf{B})$,
\item[(ii)] $f(I)\ge S_0^f(\mathbf{A}|\mathbf{B})$.
\end{enumerate}
\end{cor}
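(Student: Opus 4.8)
The plan is to obtain both inequalities directly from the upper bound \eqref{ShEnIn} of Theorem \ref{tShEnIn} by specializing the parameter $p$ and exploiting the normalization $\sum_{j=1}^n A_j = \sum_{j=1}^n B_j = I$. The key observation is that for integer exponents the operator $A_j \natural_q B_j$ collapses: since $X \natural_q Y = X^{1/2}(X^{-1/2} Y X^{-1/2})^q X^{1/2}$, inserting $q=0,1,2$ and cancelling the factors $A_j^{\pm 1/2}$ yields $A_j \natural_0 B_j = A_j$, $A_j \natural_1 B_j = B_j$, and $A_j \natural_2 B_j = B_j A_j^{-1} B_j$. Operator concavity of $f$ is exactly what licenses the application of Theorem \ref{tShEnIn} for every $p\in[0,1]$, while operator monotonicity guarantees that $S_p^f(\mathbf{A}|\mathbf{B})$ is the quantity of Definition \ref{d1}; note that $p=0$ and $p=1$ both lie in the admissible range $[0,1]$.

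For part (ii) I would take $p=0$ in \eqref{ShEnIn}. Then the argument of $f$ becomes $\sum_{j=1}^n A_j \natural_1 B_j = \sum_{j=1}^n B_j = I$, whereas the defect term $I-\sum_{j=1}^n A_j \natural_0 B_j = I - \sum_{j=1}^n A_j = 0$. Hence both $t_0$-dependent summands vanish and \eqref{ShEnIn} reduces to $f(I) \ge S_0^f(\mathbf{A}|\mathbf{B})$, which is precisely (ii).

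For part (i) I would instead take $p=1$. Then $\sum_{j=1}^n A_j \natural_{p+1} B_j = \sum_{j=1}^n A_j \natural_2 B_j = \sum_{j=1}^n B_j A_j^{-1} B_j$, while the defect term $I - \sum_{j=1}^n A_j \natural_1 B_j = I - \sum_{j=1}^n B_j = 0$ again annihilates the $t_0$-terms. Thus \eqref{ShEnIn} becomes $f\bigl(\sum_{j=1}^n B_j A_j^{-1} B_j\bigr) \ge S_1^f(\mathbf{A}|\mathbf{B})$, which is (i).

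The only step demanding genuine care—and the nearest thing to an obstacle—is verifying that the two $t_0$-dependent terms cancel exactly, that is, that the defect $I-\sum_{j=1}^n A_j \natural_p B_j$ equals $0$ (and not merely $\le 0$) for the chosen values of $p$. This is where the full strength of $\sum_{j=1}^n A_j = \sum_{j=1}^n B_j = I$ enters: together with $A_j\natural_0 B_j = A_j$ and $A_j\natural_1 B_j = B_j$ it forces the equality $\sum_{j=1}^n A_j \natural_p B_j = I$ exactly at $p=0$ and $p=1$, so no residual $t_0$-term survives and the resulting bound is independent of $t_0$. Everything else is the routine simplification of $A_j \natural_q B_j$ recorded at the outset.
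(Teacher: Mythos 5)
Your proposal is correct and follows exactly the paper's own argument: both parts are obtained from inequality \eqref{ShEnIn} of Theorem \ref{tShEnIn} by taking $p=1$ for (i) and $p=0$ for (ii), using $\sum_{j=1}^n A_j=\sum_{j=1}^n B_j=I$ to make the defect term $I-\sum_{j=1}^n A_j\natural_p B_j$ vanish, together with the identities $A_j\natural_0 B_j=A_j$, $A_j\natural_1 B_j=B_j$, $A_j\natural_2 B_j=B_jA_j^{-1}B_j$. Your explicit remark that the defect must equal zero (not merely be nonpositive) so that the $t_0$-terms cancel is a nice point of care, but the proof is otherwise identical to the paper's.
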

\begin{proof}
(i) Setting $p=1$ in Theorem \ref{tShEnIn} and applying
$\sum_{j=1}^n A_j \natural_1 B_j=\sum_{j=1}^n B_j=I$, we obtain
\begin{align*}
f\left(\sum_{j=1}^nB_jA_j^{-1}B_j\right)=f\left(\sum_{j=1}^n A_j
\natural_2 B_j\right) \ge S_1^f(\mathbf{A}|\mathbf{B})\,.
\end{align*}
(ii) Putting $p=0$ in Theorem \ref{tShEnIn} and using $\sum_{j=1}^n
A_j \natural_0 B_j=\sum_{j=1}^n A_j=I$, we get
\begin{align*}
f(I)=f\left(\sum_{j=1}^nB_j\right)=f\left(\sum_{j=1}^n A_j
\natural_1B_j\right)\ge S_0^f(\mathbf{A}|\mathbf{B})\,.
\end{align*}
\end{proof}
%----------------------------------------------------------------------------------------------------------------------
Next we extend the operator entropy for $n$ strictly positive
operators $A_1,\cdots,A_n\in\mathbb{B}(\mathscr H)$ and refine the
operator entropy inequality.
%----------------------------------------------------------------------------------------------------------------------
\begin{cor}\label{coei2}
Let $A_1,\cdots,A_n\in\mathbb{B}(\mathscr H)$ be a sequence of
strictly positive operators on a Hilbert space $\mathscr{H}$ such
that $\sum_{j=1}^nA_j=I$. Then
\begin{eqnarray}\label{mos1}
\log\left(\sum_{j=1}^nA_j^{-1}\right)\ge (\log n)
I-\frac{1}{n}\sum_{j=1}^n\log A_j\,.
\end{eqnarray}
\end{cor}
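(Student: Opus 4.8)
The plan is to obtain \eqref{mos1} by specializing the entropy bound of Theorem \ref{tShEnIn} to a cleverly chosen second sequence. The decisive choice is $B_j=\tfrac1n I$ for every $j$: then $\sum_{j=1}^nB_j=I$, so together with the hypothesis $\sum_{j=1}^nA_j=I$ the pair $\mathbf A,\mathbf B$ is admissible, and, more importantly, each $A_j^{-1/2}B_jA_j^{-1/2}=\tfrac1n A_j^{-1}$ is a scalar multiple of $A_j^{-1}$, which reduces every functional-calculus expression to a function of $A_j$ alone. I would then feed $f(t)=\log t$ into the case $p=1$ of the upper bound \eqref{ShEnIn}, which is precisely the inequality of Corollary \ref{coei1}(i). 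Although $\log$ is not nonnegative, this is harmless: the derivation of \eqref{ShEnIn} uses only the operator concavity of $f$ through Lemma \ref{HANP}(iii), and $\log$ is operator concave on $(0,\infty)$, whereas nonnegativity enters Theorem \ref{tShEnIn} only in its lower bound \eqref{ShEnIn2}, which I do not need.

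For the left-hand side I would compute $A_j\natural_2 B_j=B_jA_j^{-1}B_j=\tfrac1{n^2}A_j^{-1}$, and since $\sum_j A_j\natural_1 B_j=\sum_jB_j=I$ the $t_0$-terms in \eqref{ShEnIn} drop out, leaving $\log\!\big(\tfrac1{n^2}\sum_{j=1}^nA_j^{-1}\big)\ge S_1^{\log}(\mathbf A|\mathbf B)$. Pulling the scalar out gives $\log\!\big(\tfrac1{n^2}\sum_jA_j^{-1}\big)=-2(\log n)I+\log\!\big(\sum_jA_j^{-1}\big)$.

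For the right-hand side I would evaluate the entropy term using the commutativity just arranged. Setting $T_j=\tfrac1n A_j^{-1}$, one has $S_1^{\log}(A_j|B_j)=A_j^{1/2}T_j\log(T_j)A_j^{1/2}$; since $T_j$ is a function of $A_j$ it commutes with $A_j^{1/2}$, so $A_j^{1/2}T_jA_j^{1/2}=\tfrac1n I$, while $\log T_j=-(\log n)I-\log A_j$. Hence $S_1^{\log}(A_j|B_j)=\tfrac1n\big(-(\log n)I-\log A_j\big)$, and summing over the $n$ indices gives $S_1^{\log}(\mathbf A|\mathbf B)=-(\log n)I-\tfrac1n\sum_{j=1}^n\log A_j$.

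Substituting both computations into the inequality and cancelling the $\log n$ terms produces exactly $\log\!\big(\sum_jA_j^{-1}\big)\ge(\log n)I-\tfrac1n\sum_j\log A_j$. The point that needs the most care is the legitimacy of using $f=\log$ in a framework stated for nonnegative $f$; the resolution, as indicated above, is to invoke only the concavity-based half \eqref{ShEnIn} rather than the full two-sided theorem. Equivalently, one could bypass the entropy formalism altogether and apply Lemma \ref{HANP}(iii) directly to the concave function $\log$ with $C_j=\tfrac1{\sqrt n}I$ and $X_j=nA_j^{-1}$, which makes the $t_0$-term vanish and yields the inequality in one line. The remaining functional-calculus simplifications are routine once the commutation of $T_j$ with $A_j$ is observed.
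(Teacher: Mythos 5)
Your proof is correct and is essentially the paper's own: the paper likewise applies Corollary \ref{coei1}(i) with $\mathbf{B}=\left(\frac{1}{n}I,\cdots,\frac{1}{n}I\right)$ and $f(t)=\log t$, and carries out exactly the same functional-calculus simplifications (pulling out the scalar $\frac{1}{n^2}$, using that $\frac{1}{n}A_j^{-1}$ commutes with $A_j^{1/2}$) to reach \eqref{mos1}. Your added justification for feeding $f=\log$ into a framework stated for $f:(0,\infty)\to[0,\infty)$ --- namely that only the concavity-based bound \eqref{ShEnIn} via Lemma \ref{HANP}(iii) is invoked, nonnegativity entering only the lower bound \eqref{ShEnIn2} --- addresses a point the paper passes over silently, so your write-up is if anything more careful than the original.
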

\begin{proof}
Taking $\mathbf{A}=(A_1,\cdots,A_n)$ and
$\mathbf{B}=(\frac{1}{n}I,\cdots, \frac{1}{n}I)$ and $f(t)=\log t$
in Corollary \ref{coei1} (i), we get
\begin{align*}
-2(\log n)I+\log \left(\sum_{j=1}^nA_j^{-1}\right)&=\log\left(\frac{1}{n^2}\sum_{j=1}^nA_j^{-1}\right)\\
& \ge S_1^{\log}(\mathbf A | \mathbf B)\\
& =\sum_{j=1}^n\frac{1}{n}A_j^{-\frac{1}{2}}\log\left(\frac{1}{n}A_j^{-1}\right)A_j^{\frac{1}{2}}\\
& =\sum_{j=1}^n\frac{1}{n}\log\left(\frac{1}{n}A_j^{-1}\right)\\
& =-\sum_{j=1}^n\frac{1}{n}\left((\log n)I+\log A_j\right)\\
& =-(\log n)I-\frac{1}{n}\sum_{j=1}^n\log A_j\,,
\end{align*}
which yields \eqref{mos1}.
\end{proof}
%-----------------------------------------------------------------------------------------------------------------------
\begin{cor}[Operator Entropy Inequality]\label{toei}
Assume that $A_1,\cdots,A_n\in\mathbb{B}(\mathscr H)$ are positive invertible
operators satisfying  $\sum_{j=1}^nA_j=I$. Then
\begin{eqnarray*}
-\sum_{j=1}^nA_j\log A_j\le(\log n)I\,.
\end{eqnarray*}
\end{cor}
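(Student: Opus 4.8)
The plan is to derive this from Corollary~\ref{coei2}, which for the equal-weight choice already encodes a Shannon-type entropy bound. First I would recognize that the final statement is exactly the ``dual'' inequality to \eqref{mos1}: where Corollary~\ref{coei2} bounds $\log(\sum_j A_j^{-1})$ from below, here we bound the entropy-like quantity $-\sum_j A_j \log A_j$ from above. The natural route is to invoke Corollary~\ref{coei1}(ii) with the substitution $\mathbf{A}=(A_1,\dots,A_n)$ and $\mathbf{B}=(\frac{1}{n}I,\dots,\frac{1}{n}I)$, which is admissible since $\sum_j A_j = I$ and $\sum_j \frac{1}{n}I = I$, and with $f(t)=\log t$, which is both operator monotone and operator concave on $(0,\infty)$.

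Carrying this out, part~(ii) of Corollary~\ref{coei1} gives $f(I) \ge S_0^f(\mathbf{A}|\mathbf{B})$. Since $f(I)=\log I = 0$, the key step is to compute the right-hand side explicitly. Each summand is
\begin{align*}
S_0^{\log}(A_j|B_j) &= A_j^{1/2}\log\!\left(A_j^{-1/2}\,\tfrac{1}{n}I\,A_j^{-1/2}\right)A_j^{1/2}\\
&= A_j^{1/2}\log\!\left(\tfrac{1}{n}A_j^{-1}\right)A_j^{1/2}\\
&= A_j^{1/2}\!\left(-(\log n)I - \log A_j\right)A_j^{1/2}\\
&= -(\log n)A_j - A_j^{1/2}(\log A_j)A_j^{1/2}.
\end{align*}
Summing over $j$ and using $\sum_j A_j = I$ yields $S_0^{\log}(\mathbf{A}|\mathbf{B}) = -(\log n)I - \sum_j A_j^{1/2}(\log A_j)A_j^{1/2}$. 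The inequality $0 \ge S_0^{\log}(\mathbf{A}|\mathbf{B})$ then rearranges to $\sum_j A_j^{1/2}(\log A_j)A_j^{1/2} \ge -(\log n)I$, i.e. $-\sum_j A_j^{1/2}(\log A_j)A_j^{1/2} \le (\log n)I$.

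The one genuine subtlety — and what I expect to be the main obstacle — is the passage from $A_j^{1/2}(\log A_j)A_j^{1/2}$ to $A_j \log A_j$ as it appears in the statement. These two operators are not equal in general, since $A_j^{1/2}$ and $\log A_j$ commute (both are functions of $A_j$), so in fact $A_j^{1/2}(\log A_j)A_j^{1/2} = A_j^{1/2}\,A_j^{1/2}\log A_j = A_j \log A_j$; the commutativity resolves the issue cleanly. Thus the rearranged inequality is precisely $-\sum_j A_j \log A_j \le (\log n)I$, which is the desired conclusion. I would therefore present the argument as: apply Corollary~\ref{coei1}(ii) with the stated sequences and $f=\log$, compute each term of $S_0^{\log}(A_j|B_j)$ using the functional-calculus identity $\log(\frac{1}{n}A_j^{-1}) = -(\log n)I - \log A_j$ and the commutativity of $A_j^{1/2}$ with $\log A_j$, and conclude by rearranging.
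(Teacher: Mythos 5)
Your proposal is correct and follows essentially the same route as the paper: apply Corollary~\ref{coei1}(ii) with $\mathbf{A}=(A_1,\dots,A_n)$, $\mathbf{B}=(\frac{1}{n}I,\dots,\frac{1}{n}I)$ and $f(t)=\log t$, expand $\log(\frac{1}{n}A_j^{-1})=-(\log n)I-\log A_j$, and sum using $\sum_j A_j=I$. The only difference is cosmetic: you explicitly justify the identification $A_j^{1/2}(\log A_j)A_j^{1/2}=A_j\log A_j$ via commutativity of functions of $A_j$, a step the paper leaves implicit.
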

\begin{proof}
Letting $\mathbf{A}=(A_1,\cdots,A_n)$, $\mathbf{B}=\left(\frac{1}{n}I,\cdots,\frac{1}{n}I\right)$
and $f(t)=\log t$ in Corollary \ref{coei1} (ii), we get
\begin{align*}
0&=\log I\\
& \ge S_0^{\log}(\mathbf{A}|\mathbf{B})\\
&=\sum_{j=1}^nA_j^{\frac{1}{2}}\log\left(\frac{1}{n}A_j^{-1}\right)A_j^{\frac{1}{2}}\\
& = \sum_{j=1}^nA_j^{\frac{1}{2}}\left(-(\log n)I-\log A_j\right)A_j^{\frac{1}{2}}\\
& = -(\log n)\sum_{j=1}^nA_j-\sum_{j=1}^nA_j^{\frac{1}{2}}\left(\log
A_j\right)A_j^{\frac{1}{2}}\,.
\end{align*}
\end{proof}
%------------------------------------------------------------------------------------------------------
\begin{rem}
Let $\mathbf{a}=(a_1,\cdots$ $,a_n)$ and
$\mathbf{b}=(b_1,\cdots,b_n)$ be $n$-tuples of positive real numbers
such that $\sum_{j=1}^na_j=\sum_{j=1}^nb_j=1$. Put
$A_i=[a_i]_{1\times 1}\in{\mathcal M}_1(\mathbb{C})$ and
$B_i=[b_i]_{1\times 1}\in{\mathcal M}_1(\mathbb{C})$. It follows
from  Corollary \ref{coei1} (ii) that $0
\ge\sum_{j=1}^na_j\log\frac{b_j}{a_j}\,,$ which is an entropy
inequality related to the Kullback–-Leibler relative entropy or
information divergence $ S(p,q)=\sum_{j=1}^np_j\log\frac{p_j}{q_j} $
with the convention $x \log x = 0$ if $x = 0$, and $x \log y =
+\infty$ if $ y = 0$ and $x\neq 0$; cf. \cite{KL}.
\end{rem}

%------------------------------------------------------------------------------------------------------
\begin{thm}
Let $p\in[0, 1]$ and let $A, B$ be two strictly positive operator on
a Hilbert space $\mathscr H$ such that $A \natural_{p-2} B\le I$ and
$B^2\le A^2$. If $f: (0,\infty) \to [0,\infty)$ is a function which
is both operator monotone and operator concave, then
\begin{align}\label{e111}
f&\big(A \natural_{p+1} B + t_0\left(I-A \natural_p B\right)\big)-f(t_0)\left(I-A \natural_p B\right)\nn\\
&\ge S_p^f(A|B)\\
&\ge -f\big(A \natural_{p-1} B+t_0\left(I-A \natural_p
B\right)\big)+f(t_0)\left(I-A \natural_p B\right)\nn\,,
\end{align}
for a fixed real number $t_0>0$.
\end{thm}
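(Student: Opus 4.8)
The plan is to reduce the statement to two applications of the operator Jensen inequality in the form of Lemma \ref{HANP}(ii), mimicking the proof of Theorem \ref{tShEnIn} but with the normalizations $\sum_j A_j=\sum_j B_j=I$ replaced by the two running hypotheses. Throughout I would abbreviate $T:=A^{-1/2}BA^{-1/2}$, so that $A\natural_q B=A^{1/2}T^qA^{1/2}$ for every real $q$ and $S_q^f(A|B)=A^{1/2}T^qf(T)A^{1/2}$. The first move is to extract the real content of the hypothesis $B^2\le A^2$: since the square root is operator monotone (L\"owner--Heinz), $B^2\le A^2$ gives $B\le A$, and congruence by $A^{-1/2}$ then yields $T=A^{-1/2}BA^{-1/2}\le A^{-1/2}AA^{-1/2}=I$. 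This is the only place $B^2\le A^2$ is used, and it is exactly the operator-monotone square-root trick.

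With $T\le I$ secured, the scalar inequality $\lambda^p\le\lambda^{p-2}$ valid for $\lambda\in(0,1]$ (here $p\ge p-2$) upgrades through the functional calculus to $T^p\le T^{p-2}$, and congruence by $A^{1/2}$ gives $A\natural_p B\le A\natural_{p-2}B\le I$, the last inequality being the hypothesis $A\natural_{p-2}B\le I$. The bound $A\natural_p B\le I$ is precisely the contractivity I need: for the upper bound I would apply Lemma \ref{HANP}(ii) with $C:=T^{p/2}A^{1/2}$ (so $C^*C=A\natural_p B\le I$, hence $\|C\|\le1$) and $X:=T$, whose spectrum lies in $(0,\infty)$. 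A direct computation gives $C^*XC=A\natural_{p+1}B$ and $C^*f(X)C=A^{1/2}T^pf(T)A^{1/2}=S_p^f(A|B)$, and the Lemma is exactly the upper bound in \eqref{e111} after transposing the term $f(t_0)(I-A\natural_p B)$.

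For the lower bound I would keep the same contraction $C=T^{p/2}A^{1/2}$ but feed Lemma \ref{HANP}(ii) the operator $X:=T^{-1}$, again with spectrum in $(0,\infty)$. This is engineered so that $C^*XC=A^{1/2}T^{p-1}A^{1/2}=A\natural_{p-1}B$ and the coefficient $I-C^*C=I-A\natural_p B$ reproduces the stated form verbatim; the Lemma then reads $f\big(A\natural_{p-1}B+t_0(I-A\natural_p B)\big)\ge A^{1/2}T^pf(T^{-1})A^{1/2}+f(t_0)(I-A\natural_p B)$. It remains to discard the first term on the right in the correct direction, and here nonnegativity of $f$ does the work: $f(T),f(T^{-1})\ge0$ commute with $T^p\ge0$, so $A^{1/2}T^p\big(f(T^{-1})+f(T)\big)A^{1/2}\ge0$, that is $A^{1/2}T^pf(T^{-1})A^{1/2}\ge -S_p^f(A|B)$. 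Substituting and rearranging delivers the lower bound; note that this route uses only operator concavity and $f\ge0$, and sidesteps the $(p-2)$-shift together with the inequality $S_p^f+S_{p-2}^f\ge0$ employed in Theorem \ref{tShEnIn}.

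The one genuine obstacle is establishing $A\natural_p B\le I$, and the point to stress is that both hypotheses are indispensable: the pair $A=B=2I$ satisfies $B^2\le A^2$ but not $A\natural_{p-2}B\le I$, while $A=I$, $B=4I$ with $p=1/2$ satisfies $A\natural_{p-2}B\le I$ but not $B^2\le A^2$, and in both cases $A\natural_p B\not\le I$. Only the interaction of the two---turning $B^2\le A^2$ into $T\le I$ and then using it to raise the exponent from $p-2$ to $p$---closes the gap; once this is in place both inequalities are short.
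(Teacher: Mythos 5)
Your proof is correct, and it reaches the stated inequalities by a route that differs from the paper's in two substantive places (writing $T=A^{-1/2}BA^{-1/2}$, as you do). For the key contractivity $A\natural_p B\le I$, the paper argues by congruence: from $A\natural_{p-2}B\le I$ it deduces $T^{p-2}\le A^{-1}$, hence $T^p\le TA^{-1}T$ and $A\natural_p B\le BA^{-2}B$, and then uses $B^2\le A^2$ with the operator monotonicity of $t\mapsto -1/t$ to get $BA^{-2}B\le BB^{-2}B=I$; you instead pass from $B^2\le A^2$ to $B\le A$ by L\"owner--Heinz, hence $T\le I$, and then raise the exponent, $T^p\le T^{p-2}$, so that $A\natural_p B\le A\natural_{p-2}B\le I$. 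Both derivations are short and valid; yours displays the interaction of the two hypotheses more transparently, and your examples showing that neither hypothesis suffices alone are a worthwhile check. The more significant difference is the lower bound. The paper defers to ``the same reasoning as in Theorem \ref{tShEnIn}'', whose lower-bound step asserts that the Jensen application with $t_0$-coefficient $I-A\natural_p B$ produces the term $S_{p-2}^f(A|B)$; but with the natural contraction $C=T^{p/2}A^{1/2}$ (the choice making $C^*C=A\natural_p B$) and $X=T^{-1}$, the term actually produced is $A^{1/2}T^pf(T^{-1})A^{1/2}$, which coincides with $S_{p-2}^f(A|B)=A^{1/2}T^{p-2}f(T)A^{1/2}$ only for $f$ satisfying $t^2f(1/t)=f(t)$ --- it fails even for $f=\log$, where the term equals $-S_p(A|B)$, the identity Furuta's original argument exploits; the alternative choice $C=T^{(p-2)/2}A^{1/2}$, $X=T$ does give $S_{p-2}^f$ but forces the coefficient $I-A\natural_{p-2}B$ rather than the stated $I-A\natural_p B$. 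Your argument --- keep the middle term as $A^{1/2}T^pf(T^{-1})A^{1/2}$ and discard it using only $f\ge 0$, via $T^p\bigl(f(T^{-1})+f(T)\bigr)\ge 0$ --- yields exactly the stated inequality and quietly repairs this discrepancy, so on this point your proof is not merely different but cleaner than the one the paper points to.
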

\begin{proof}
It follows from $A \natural_{p-2} B\le I$ that
\begin{align*}
A^{\frac{1}{2}}\left(A^{-\frac{1}{2}}BA^{-\frac{1}{2}}\right)^{p-2}A^{\frac{1}{2}} &\le I\\
 \left(A^{-\frac{1}{2}}BA^{-\frac{1}{2}}\right)^{p-2} &\le A^{-1}\\
\left(A^{-\frac{1}{2}}BA^{-\frac{1}{2}}\right)^p &\le \left(A^{-\frac{1}{2}}BA^{-\frac{1}{2}}\right)A^{-1}\left(A^{-\frac{1}{2}}BA^{-\frac{1}{2}}\right)\\
 A^{\frac{1}{2}}\left(A^{-\frac{1}{2}}BA^{-\frac{1}{2}}\right)^pA^{\frac{1}{2}} &\le
 BA^{-2}B\,.
\end{align*}
Since $B^2 \le A^2$ and the map $t\mapsto-\frac{1}{t}$ is operator
monotone, we have
$$A^{\frac{1}{2}}\left(A^{-\frac{1}{2}}BA^{-\frac{1}{2}}\right)^pA^{\frac{1}{2}}\le
I$$ so that $A \natural_p B\le I$. Now the same reasoning as in the
proof of Theorem \ref{tShEnIn} (with $n=1$ and by using Lemma
\ref{HANP} (ii)) yields the desired inequalities.
\end{proof}
%------------------------------------------------------------------------------------------------------
Recall that a map
$\Phi:\mathbb{B}(\mathscr{H})\to\mathbb{B}(\mathscr{K})$, where
$\mathscr{H}$ and $\mathscr{K}$ are Hilbert spaces, is called
positive if $\Phi(A)\geq 0$ whenever $A\geq 0$ and is said to be
normalized if it preserves the identity. The paper \cite[Lemma
5.2]{MMM2} includes a refinement
of the Jensen inequality for Hilbert space operators as follows:\\
Let $\mu=(\mu_1,\cdots,\mu_m)$ and
$\lambda=(\lambda_1,\cdots,\lambda_n)$ be two probability vectors.
By a (discrete) weight function (with respect to $\mu$ and
$\lambda$) we mean a mapping $\omega:\{(i,j)~:~1\le i\le m,~1\le
j\le n\}\to [0,\infty)$ such that
$\sum_{i=1}^m\omega(i,j)\mu_i=1\,\,(j=1,\cdots,n)$ and
$\sum_{j=1}^n\omega(i,j)\lambda_j=1\,\,(i=1,\cdots,m)$. If $f$ is a
real-valued operator concave function on an interval $J$, $A_1, \cdots, A_n$ are self-adjoint operators with spectra
in $J$ and
$\Phi:\mathbb{B}(\mathscr{H})\to\mathbb{B}(\mathscr{K})$ is a
normalized positive map, then
\begin{equation}\label{j1}
f\left(\sum_{j=1}^n\lambda_j\Phi(A_j)\right)\ge \sum_{i=1}^m\mu_i
f\left(\sum_{j=1}^n\omega(i,j)\lambda_j\Phi(A_j)\right)
\ge\sum_{j=1}^n\lambda_j\Phi(f(A_j))\,.
\end{equation}

A matrix $A=[a_{ij}]\in {\mathcal M}_n(\mathbb{C}) $ is said to be a
\emph{doubly stochastic matrix} if $a_{ij} \ge
0\quad(i,j=1,\cdots,n)$ and
$\sum_{i=1}^na_{ij}=\sum_{j=1}^na_{ij}=1$. Now we introduce a
refinement of the operator Jensen inequality.

%-------------------------------------------------------------------------------------------------------------
\begin{thm}\label{t1}
Suppose that $f$ is a real-valued operator concave function on an
interval $J$ and $A_1, \cdots, A_n$ are self-adjoint operators with
spectra in an interval $J$. Assume that $B=[b_{ij}]$ and
$C=[c_{ij}]$ are two $n\times n$ doubly stochastic matrices,
$\omega_1$ and $\omega_2$ are weight functions with respect to the
same probability vector and
$\Phi:\mathbb{B}(\mathscr{H})\to\mathbb{B}(\mathscr{K})$ is a
normalized positive map. If the operator-valued functions
$F_{\omega_1,\omega_2}$ and $F_{B,C}$ are defined by
\begin{align*}
F_{\omega_1,\omega_2}(t)&:=\sum_{i=1}^m\mu_if\left(\sum_{j=1}^n
[(1-t)\omega_1(i,j)+t\omega_2(i,j)]\lambda_j\Phi(A_j)\right)\quad(0\le
t\le1)
\end{align*}
and
\begin{align}
F_{B,C}(t)&:=\frac{1}{n}\sum_{i=1}^nf\left(\sum_{j=1}^n[(1-t)b_{ij}+tc_{ij}]\Phi(A_j)\right)\quad(0\le
t\le1)\,,\label{e7}
\end{align}
then
\begin{enumerate}
\item[(i)]
\begin{equation}\label{e5}
f\left(\sum_{j=1}^n\lambda_j\Phi(A_j)\right)\ge
F_{\omega_1,\omega_2}(t)
\ge\sum_{j=1}^n\lambda_j\Phi(f(A_j))\quad(0\le t\le 1)\,.
\end{equation}
In particular,
$$
f\left(\frac{1}{n}\sum_{j=1}^n\Phi(A_j)\right)\ge
F_{B,C}(t)\ge\frac{1}{n}\sum_{j=1}^n\Phi(f(A_j))\quad(0\le t\le1)\,.
$$
\item[(ii)] For any $i\quad(i=1,\cdots,n)$, the maps
$$
t\longmapsto
f\left(\sum_{j=1}^n[(1-t)\omega_1(i,j)+t\omega_2(i,j)]\lambda_j\Phi(A_j)\right)
\quad(0\le t\le 1),
$$
as well as the function $F_{\omega_1,\omega_2}$ are operator
concave.\\ In particular, $F_{B,C} ~\emph{is concave on}~ [0,1]$.
\end{enumerate}
\end{thm}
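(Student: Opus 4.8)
The plan is to reduce everything to the refinement of the Jensen inequality recorded in \eqref{j1}, exploiting the fact that the defining conditions for a weight function are affine. For part (i), the first observation is that the collection of weight functions (with respect to fixed $\mu$ and $\lambda$) is convex: the two constraints $\sum_{i=1}^m\omega(i,j)\mu_i=1$ and $\sum_{j=1}^n\omega(i,j)\lambda_j=1$ are affine in $\omega$, so for each $t\in[0,1]$ the map $\omega_t(i,j):=(1-t)\omega_1(i,j)+t\omega_2(i,j)$ is again a weight function with respect to the same $\mu$ and $\lambda$. I would then substitute $\omega_t$ into \eqref{j1}; its middle term becomes precisely $F_{\omega_1,\omega_2}(t)$, which is exactly \eqref{e5}. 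For the special case involving $F_{B,C}$, I would take the uniform probability vectors $\mu_i=\lambda_j=\frac{1}{n}$ and set $\omega_1(i,j):=n\,b_{ij}$ and $\omega_2(i,j):=n\,c_{ij}$. The doubly stochastic conditions $\sum_i b_{ij}=\sum_j b_{ij}=1$ translate into exactly the two weight-function constraints, and one checks that $\sum_{j=1}^n\omega_1(i,j)\lambda_j\Phi(A_j)=\sum_{j=1}^n b_{ij}\Phi(A_j)$, so that the general inequality collapses to the stated particular case.

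For part (ii), I would fix $i$ and write $X_i:=\sum_{j=1}^n\omega_1(i,j)\lambda_j\Phi(A_j)$ and $Y_i:=\sum_{j=1}^n\omega_2(i,j)\lambda_j\Phi(A_j)$, so that the argument of $f$ is the affine operator-valued path $Z_i(t):=(1-t)X_i+tY_i$. Since $Z_i$ is affine in $t$, for $s,u\in[0,1]$ and $\alpha\in[0,1]$ one has $Z_i(\alpha s+(1-\alpha)u)=\alpha Z_i(s)+(1-\alpha)Z_i(u)$; applying the operator concavity of $f$ to the right-hand side gives $f(Z_i(\alpha s+(1-\alpha)u))\ge\alpha f(Z_i(s))+(1-\alpha)f(Z_i(u))$, which is exactly operator concavity of $t\mapsto f(Z_i(t))$. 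Forming the nonnegative combination $F_{\omega_1,\omega_2}(t)=\sum_{i=1}^m\mu_i f(Z_i(t))$ and noting that a nonnegative combination of operator concave maps is operator concave then shows $F_{\omega_1,\omega_2}$ is operator concave, and the uniform specialization of the previous paragraph yields concavity of $F_{B,C}$ on $[0,1]$.

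I expect the only point requiring genuine care to be the bookkeeping in the specialization to doubly stochastic matrices, namely matching the factor $n$ against the uniform weights $\lambda_j=\frac{1}{n}$ so that the scaling cancels correctly in $\sum_j\omega(i,j)\lambda_j\Phi(A_j)$. There is no real analytic obstacle in either part: the substance of (i) is that weight functions are closed under convex combinations, and the substance of (ii) is that precomposing an operator concave $f$ with the affine path $Z_i(t)$ preserves operator concavity.
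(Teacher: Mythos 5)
Your proposal is correct and follows essentially the same route as the paper: part (i) by noting that convex combinations of weight functions are weight functions (so \eqref{j1} applies with $\omega_t$, and the doubly stochastic case is the specialization $\mu_i=\lambda_j=\frac{1}{n}$, $\omega_1(i,j)=nb_{ij}$, $\omega_2(i,j)=nc_{ij}$), and part (ii) by observing that the argument of $f$ is affine in $t$ and then invoking operator concavity of $f$, exactly as in the paper's computation with $\eta_1 t_1+\eta_2 t_2$. No gaps; your abstraction via the affine path $Z_i(t)$ is just a cleaner phrasing of the paper's displayed identity.
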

\begin{proof}
(i) Since for every $t$ in $[0,1]$, the map
$$
(i,j)\longmapsto(1-t)\omega_1(i,j)+t\omega_2(i,j)\hspace{1cm}(1\le
i\le m,~1\le j\le n)
$$
is a weight function, \eqref{e5} follows from \eqref{j1}. By taking $m=n, \lambda_j=\mu_i=\frac{1}{n}, \omega_1(i,j)=nb_{ij}, \omega_2(i,j)=nc_{ij}$ in $F_{\omega_1,\omega_2}(t)$, we obtain the second part.\\
(ii) Let $\eta_1,\eta_2 \ge 0$ with $\eta_1+\eta_2=1$ and let
$t_1,t_2\in [0,1]$. For every $i$ with $1\le i\le m$, we have
\begin{align*}
f\Big(&\sum_{j=1}^n[(1-\eta_1 t_1-\eta_2 t_2)\omega_1(i,j)+(\eta_1
t_1+\eta_2 t_2)\omega_2(i,j)]
\lambda_j \Phi(A_j)\Big)\\
=&f\Big(\eta_1\sum_{j=1}^n[(1-t_1)\omega_1(i,j)+t_1\omega_2(i,j)]\lambda_j \Phi(A_j)\\
&+\eta_2\sum_{j=1}^n[(1-t_2)\omega_1(i,j)+t_2\omega_2(i,j)]\lambda_j \Phi(A_j)\Big)\\
\ge& \eta_1 f\Big(\sum_{j=1}^n[(1-t_1)\omega_1(i,j)+t_1\omega_2(i,j)]\lambda_j \Phi(A_j)\Big)\quad\mbox{(by the concavity of $f$)}\\
&+\eta_2
f\Big(\sum_{j=1}^n[(1-t_2)\omega_1(i,j)+t_2\omega_2(i,j)]\lambda_j
\Phi(A_j)\Big)\,,
\end{align*}
which implies (ii). The concavity of $F_{B,C}$ over $[0,1]$ is
clear.
\end{proof}
%-----------------------------------------------------------------------------------------------------
By taking $f(t)=-t\log t$ and $\Phi(t)=t$ in \eqref{e7} and by using
Theorem \ref{t1}, we obtain the following result:
%-----------------------------------------------------------------------------------------------------
\begin{cor}[Refinement of operator entropy inequality]\label{cor1}
Assume that $A_1, \cdots, A_n$ are positive self-adjoint invertible
operators with spectra in an interval $J$ and
$\sum_{j=1}^nA_j=I$. If $B=[b_{ij}]$ and $C=[c_{ij}]$ are two
$n\times n$ doubly stochastic matrices, then
\begin{align*}
(\log n)I\ge& \sum_{i=1}^n\left[-\left(\sum_{j=1}^n[(1-t)b_{ij}+tc_{ij}]A_j\right)\log\left(\sum_{j=1}^n[(1-t)b_{ij}+tc_{ij}]A_j\right)\right]\\
\ge&-\sum_{j=1}^nA_j\log A_j\quad\quad\quad(0\le t\le1)\,,
\end{align*}
\end{cor}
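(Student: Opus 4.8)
The plan is to specialize the ``In particular'' chain of inequalities in Theorem \ref{t1}(i) to the choices $f(t)=-t\log t$ and $\Phi(t)=t$. First I would record that $f(t)=-t\log t$ is operator concave on any subinterval of $(0,\infty)$; this is the classical companion to the operator convexity of $t\mapsto t\log t$, and it is precisely the hypothesis required to invoke Theorem \ref{t1}. The bridge to doubly stochastic matrices comes from the second assertion of Theorem \ref{t1}(i), obtained by setting $m=n$, $\lambda_j=\mu_i=\frac{1}{n}$, $\omega_1(i,j)=nb_{ij}$ and $\omega_2(i,j)=nc_{ij}$. The resulting maps are genuine weight functions exactly because $B$ and $C$ are doubly stochastic: the row-sum condition $\sum_j b_{ij}=1$ yields $\sum_j\omega_1(i,j)\lambda_j=1$, the column-sum condition $\sum_i b_{ij}=1$ yields $\sum_i\omega_1(i,j)\mu_i=1$, and similarly for $C$. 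This is the only point at which the doubly stochastic hypothesis is used, so I regard verifying it as the main (albeit routine) step.

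Next I would evaluate the three terms appearing in
$$
f\left(\frac{1}{n}\sum_{j=1}^n\Phi(A_j)\right)\ge F_{B,C}(t)\ge \frac{1}{n}\sum_{j=1}^n\Phi(f(A_j))\,.
$$
For the leftmost term, the normalization $\sum_{j=1}^nA_j=I$ gives $\frac{1}{n}\sum_{j=1}^n\Phi(A_j)=\frac{1}{n}I$, and since $\log\left(\frac{1}{n}I\right)=-(\log n)I$ we obtain $f\left(\frac{1}{n}I\right)=\frac{\log n}{n}I$. For the middle term, abbreviating $D_i=\sum_{j=1}^n[(1-t)b_{ij}+tc_{ij}]A_j$ and reading off definition \eqref{e7}, we get $F_{B,C}(t)=-\frac{1}{n}\sum_{i=1}^nD_i\log D_i$; here each $D_i$ is a convex combination of the $A_j$ (the coefficients $(1-t)b_{ij}+tc_{ij}$ are nonnegative and sum to $1$ over $j$), so it has spectrum in $J$ and the functional calculus is legitimate. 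For the rightmost term, $\frac{1}{n}\sum_{j=1}^n\Phi(f(A_j))=-\frac{1}{n}\sum_{j=1}^nA_j\log A_j$.

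Finally I would substitute these evaluations into the displayed chain and multiply through by the positive scalar $n$, transforming $\frac{\log n}{n}I\ge -\frac{1}{n}\sum_iD_i\log D_i\ge -\frac{1}{n}\sum_jA_j\log A_j$ into precisely the asserted inequality. No step here presents a genuine difficulty once Theorem \ref{t1} is available: the argument reduces to identifying the three terms under the substitution $f(t)=-t\log t$, $\Phi=\mathrm{id}$, and clearing the common factor $1/n$.
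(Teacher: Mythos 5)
Your proposal is correct and follows exactly the paper's route: the paper derives the corollary by taking $f(t)=-t\log t$ and $\Phi(t)=t$ in the ``In particular'' part of Theorem \ref{t1}(i) (via \eqref{e7}), which is precisely what you do, with the weight-function verification for $\omega_1(i,j)=nb_{ij}$, $\omega_2(i,j)=nc_{ij}$ and the evaluation of the three terms carried out correctly. Your write-up is in fact more detailed than the paper's one-line justification, since you make explicit the role of double stochasticity and the final rescaling by $n$.
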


%------------------------------------------------------------------------------------------------------------

\end{document}